\newtheorem{theorem}{Theorem}[section]
\newtheorem{corollary}[theorem]{Corollary}
\newtheorem{definition}[theorem]{Definition}
\newtheorem{lemma}[theorem]{Lemma}
\newtheorem{proposition}[theorem]{Proposition}
\newtheorem{problem}{Problem}
\theoremstyle{remark}
\newtheorem{remark}[theorem]{Remark}
\newtheorem{algorithm}{Algorithm}
\def\F{\mathcal{F}}
\def\cl{\overline}
\def\Fk{\mathcal{F}_k}
\def\EI{\mathcal{E}_\infty}
\def\t{\mathfrak{t}}
\def\e{\varepsilon}
\def\R{\mathbb{R}}
\begin{document}

\title{Chebyshev approximation of exponential data}

\author[M.\ Rodr\'iguez-Arias, J.\ Cabello, J.A.F.\ Torvisco]
{Mariano Rodr\'iguez-Arias Fern\'andez, Javier Cabello S\'anchez, \\ 
Juan Antonio Fern\'andez Torvisco}
\address{Departamento de Matem\'aticas and Instituto de Matemáticas, 
Universidad de Extremadura. Avda. de Elvas s/n, 06006 Badajoz; Spain. \\
arias@unex.es; coco@unex.es; jfernandck@alumnos.unex.es}

\begin{abstract}
In this paper we present an algorithm to fit data via exponentials when the 
error is measured using the $\max$-norm. 
We prove the necesssary results to show that the algorithm will converge to 
the best approximation no matter the dataset. 
\end{abstract}

\keywords{
Chebyshev approximation; exponential decay; $\max$-norm; algorithm}
\thanks{2020 {\it Mathematics Subject Classification}. 41A50; 65D15; 41A52}


\maketitle

\section{Introduction}

Given some data $\t=(t_1,\ldots,t_n)\in\R^n,$ $T=(T_1,\ldots,T_n)\in\R^n$, in this paper we 
are going to show how to determine the coefficients $a,b,k\in\R$ that make the 
exponential $f(t)=a\exp(kt)+b$ minimize the error 
\begin{equation}
\EI=\|(T_1-f(t_1),\ldots,T_n-f(t_n))\|_\infty=\max\{|T_1-f(t_1)|,\ldots,|T_n-f(t_n)|\}. 
\end{equation}

It is usual to search for the best fitting with least squares due to the simplicity 
of the procedure: as the usual norm in $\R^n$ is differentiable, looking for 
the minimum of some distance is equivalent to find the point where every 
partial derivative is 0. In spite of this, it may seem more natural to minimize 
the width of a band that contains every point---and this is exactly what 
Chebyshev approximation (approximation where the errors are measured with 
the $\max$-norm) achieves (see, e.g.,\cite{Descloux, Quesada}). 

One of the main advantages of this kind of approximation is that, eventually, 
the problem reduces to the study of the case $n=4$; the main issue is 
to determine how to reduce the problem. Once this is done, we just need to find 
the only $(a,b,k)$ that will fulfil~(\ref{los4}). 


In order to ease the notation throughout the paper, for any map $f:\mathbb{R} 
\to \mathbb{R}$  and $v=(v_1,\ldots,v_n)\in\mathbb{R}^n$, $f(v)$ denotes  
$(f(v_1),\ldots,f(v_n))\in \mathbb{R}^n$ and, for every $i<j<l$, $v_{i,j,l}$ 
denotes $(v_i,v_j,v_l)$. Let $\overline{\F}$ denote $\{ ae^{kt}+b \text{ with  } 
a,b,k \in \mathbb{R} \}$ and $\overline{\Fk}$ denote $\{ ae^{kt}+b \text{ with  } 
a,b \in \mathbb{R} \}$ for certain $k \in \mathbb{R}$, then $\F^v$ denotes 
$\{ f(v) : f \in \overline{\F}  \}$ and $\Fk^v$ denotes $\{ f(v) :  f \in \overline{\Fk}  \}$.

In what follows, $T$ and $\t$ represent vectors in $\mathbb{R}^n$, being 
$\t=(t_1,\ldots,t_n)$ such that $t_i<t_{i+1},$ $ \forall\, i=1,\ldots,n-1$. Vectors 
$T$ and $\t$ are arbitrary but fixed throughout the paper. To ease even more the 
notation, we are going to denote $\F^{\t}$ as $\F$, and $\Fk^{\t}$ as $\Fk$.

So, what we are going to solve is the following: 

\begin{problem}\label{problem1}
Given $T$ and $\t$, to approximate $T$ with $\F$ as the family of approximants 
and the $\max$-norm as the approximation criteria.
\end{problem}
In this problem, $T$ represents the data, $\t$ the instants when the data were 
recorded, $\F$ the family where we intend to find the closest element to 
$T$, and  the $\max$-norm is the approximation criteria: the way we measure how 
close to $T$ each element of $\F$ is. With this in mind, we say that $f(\t)\in\F$ is a 
best approximation of $T$ in $\F$ if $\| T-f(\t) \|_\infty \leq \| T-g(\t) \|_\infty$, 
whenever $g(\t)\in \F$. We also say that $f(\t)$ is {\em the} best approximation 
if the inequality is strict for every $g(\t)\neq f(\t)$.

When we say that we solve Problem \ref{problem1}, we mean we can determine  
the expression of the best approximation. The same applies for other problems. 

We have not been able to directly solve Problem \ref{problem1}. We can directly 
solve, however, the following related problem: 

\begin{problem}\label{problem2}
Given $T$ and $\t$ and given $k\in \mathbb{R}$, to approximate $T$ with $\Fk$ as 
the family of approximants and the $\max$-norm as the approximation criteria. 
\end{problem}

The existence and uniqueness of the best approximation for Problem \ref{problem2} 
were proved in \cite{tac}, and conditions for the existence and uniqueness of the 
best approximation for Problem \ref{problem1} were established in \cite{tac_mdpi}. 
However, a method to find the best approximation was not presented for neither 
Problem. In this paper we provide some methods to solve Problems \ref{problem1} 
and \ref{problem2} when the best approximation exists.  

In Section \ref{s_fixed_k}, Problem \ref{problem2} is solved by calculating a 
function $f_k \in \overline{\F_k}$ such that $f_k(\t)$ is the best approximation of 
Problem \ref{problem2}. We present two different methods to solve such problem.

In Section \ref{s_every_k}, Problem \ref{problem1} is solved by having into 
account that $\F = \cup_{k \in \mathbb{R}} \F_k$. Note that the solution of 
Problem \ref{problem1} is also the solution of Problem \ref{problem2} for 
certain $k\in \mathbb{R}$. Being able to solve Problem \ref{problem2} for any 
$k$ allows us to approximate the $k$ that solves Problem \ref{problem1} since 
the error function $\EI$, defined as $\EI (k):= \| T-f_k(\t)  \|_\infty$, is 
quasiconvex, as proven in \cite[theorem 2.17]{tac}. 

In the last section we present a way to improve the algorithm in some cases. 
As this is not as robust as the algorithm itself, we have decided to deal with 
it in a separate section. 


\section{Best approximation for Problem \ref{problem2}} \label{s_fixed_k}
The aim of this section is to present two methods to solve Problem \ref{problem2}. 
We will refer to the first method as {\em exhaustive method}; from a computational 
point of view, it has low efficiency---although for small datasets it is a good fit. 
The second one is a recursive method. 
The exhaustive method shows the straightforward way to solve 
Problem \ref{problem2} and hints the necessity to approach the problem 
from a different perspective; the second method does this.

\subsection{Exhaustive method (solving Problem \ref{problem2} by 
solving several minor problems)}\label{ss_exhaustive}
This method determines a finite family in $\Fk$ containing the solution of 
Problem \ref{problem2}. Comparing the error of each element from such finite 
family and choosing the element which provides the minimum error leads us to 
the solution. Before we go any further, let us define (and solve) a new problem:
\begin{problem}\label{problem3}
Given $T_{i,j,l}$ and $\t_{i,j,l}$, to approximate $T_{i,j,l}$ using 
$\Fk^{\t_{i,j,l}}$ as the family of approximants and the $\max$-norm 
as the approximation criteria.
\end{problem}

Let us solve Problem \ref{problem2} with the exhaustive method. If $k=0$, 
the solution is trivial. Note that every couple of real numbers $(a,b)$ fulfilling 
$a+b=\frac{\max(T)+\min(T)}{2}$ solves Problem \ref{problem2}. Indeed: the function 
$h$ defined as $h(t)=a+b\in \overline{\F_0}$, makes $h(\t)\in \F_0$ the best 
approximation of $T$ since $h$ is equidistant from the maximum and from the 
minimum---we have written $h(t)=a+b$ for the analogy, say, $h(t)=b+a\exp(0t)$. 
From now on we will denote as $h(\t)$ the best approximation of $T$ in $\F_0$, 
or equivalent, $h(\t)$ will denote the solution of Problem \ref{problem2} for $k=0$.

Let us solve Problem \ref{problem2} for $k \neq 0$. According to \cite[proposition 5]{tac_mdpi}, 
for any $k$, the coefficients $a$ and $b$ determining the solution of Problem \ref{problem2} 
coincide with the coefficients of the solution of Problem \ref{problem3} for certain $i<j<l$ 
in $\{1,\ldots,n \}$. Coefficients $a$ and $b$ of the solution of Problem \ref{problem3} are 
computed using \cite[lemma 1]{tac_mdpi}; such coefficients are: 
$a=\frac{T_i-T_l}{e^{kt_i}-e^{kt_l}}$ and $b=\frac{1}{2}(T_i-ae^{kt_i}+T_j-ae^{kt_j})$. 

To solve Problem \ref{problem2}, we are going to solve Problem \ref{problem3} 
for every $i<j<l$ in $\{1,\ldots,n \}$. Let us define the (finite) subset 
$A \subset \Fk$ containing the solution of Problem \ref{problem2}: 
$A= \left\{ a_{il}e^{k\t}+b_{ij}: i<j<l \right\}, $
where
$$
a_{il}=\frac{T_i-T_l}{e^{kt_i}-e^{kt_l}}, \; 
b_{ij}=\frac{1}{2}(T_i-ae^{kt_i}+T_j-ae^{kt_j}).
$$

Comparing the errors obtained by fitting with each element of $A$ and choosing 
the element with the minimum error, we solve Problem \ref{problem2}.

\begin{algorithm}[Exhaustive search of the best approximation]\label{algorithm_exhaustive}
\hfill \break
{\tt Procedure exhaustive search} ($T$, $\t$ and $k \neq 0$.) 
\begin{enumerate}
\item \textbf{For every} $i<j<l$, $a_{il}:=\frac{T_i-T_l}{e^{kt_i}-e^{kt_l}}$, \\
$b_{ij}:=\frac{1}{2}(T_i-a_{il}e^{kt_i}+T_j-a_{il}e^{kt_j})$.

\item \textbf{If} $\| T-(a_{\hat{i}\hat{l}}e^{k\t}+b_{\hat{i}\hat{j}}) 
\|_\infty = \min \{\| T-(a_{il}e^{k\t}+b_{ij}) \|_\infty  \,:\, i<j<l  \}$ \\
\textbf{then} $a:=a_{\hat{i}\hat{l}}$, $b:=b_{\hat{i}\hat{j}}$.
\end{enumerate}
\end{algorithm}

This algorithm is very simple (conceptually) and gives the exact best approximation, 
but doing calculations this way may have a serious drawback: its processing 
time. For the coefficient $a$, the number of possibilities that need to be computed is 
$\sum_{i=1}^{n-2} i = \frac{(n-1)(n-2)}{2}$, and, for $b$, the 
number of possibilities is $\sum_{i=1}^{n-2} \frac{i(i+1)}{2} = 
\frac{1}{4}(n-1)(n-2)\frac{2(n-2)+4}{3}$. These numbers hint the 
amount of computation needed.

Algorithm \ref{algorithm_exhaustive} can be eased by taking into account 
\cite[lemma 6]{tac_mdpi}, which allows to directly determine $b$ once 
the coefficient $a$ has been calculated. 


\subsection{Successive remainders method (no minor problems needed) }\label{ss_remainders}
To overcome this issue, we present a different 
method to solve Problem \ref{problem2}.

As the case $k=0$ is fully solved, being $h(\t)$ the solution, we are going to 
focus on the case $k \neq 0$. Please take into account that maximum or minimum 
means {\em global} maximum or minimum.

\begin{definition}
We say $T$ is alternated whenever it has alternating maxima and minima, 
that is, a minimum is reached between two maxima or vice versa. In a more detailed 
way, one of the following situations happens: 
\begin{itemize}
\item $T_{M_1}=T_{M_2}=\max T$, $T_{m_1}=\min T$ and ${M_1}<{m_1}<{M_2}$ 
\item $T_{M_1}=\max T$, $T_{m_1}=T_{m_2}=\min T$ and ${m_1}<{M_1}<{m_2}$.
\end{itemize}
\end{definition}

\begin{proposition}\label{p_min_max_min_or_vice_versa}
Consider an alternated $T$. Then $h(\t)$ is the solution of Problem $\ref{problem2}$.
\end{proposition}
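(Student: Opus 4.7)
The plan is to prove the universal lower bound $\|T-f(\t)\|_\infty \geq \frac{\max T-\min T}{2}$ for every $f\in\overline{\Fk}$. Since this equals $\|T-h(\t)\|_\infty$, the bound is sharp and realized by $h$, so $h(\t)$ is a best approximation. The driving observation is that any $f(t)=ae^{kt}+b$ with $k\neq 0$ is either constant (when $a=0$) or strictly monotonic in $t$ (when $a\neq 0$), and therefore cannot track an alternating max--min--max pattern at three indices without incurring a large error somewhere.

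The sub-case $a=0$ reduces $f$ to a constant $b$; then
$$\max_i|T_i-b|\;\geq\; \max\bigl(|\max T-b|,\,|\min T-b|\bigr)\;\geq\;\frac{\max T-\min T}{2}$$
for every $b\in\R$. For $a\neq 0$, assume $T$ is of the first alternated type with $M_1<m_1<M_2$, $T_{M_1}=T_{M_2}=M:=\max T$ and $T_{m_1}=m:=\min T$. Strict monotonicity of $t\mapsto ae^{kt}+b$ forces $f(t_{m_1})$ to lie strictly between $f(t_{M_1})$ and $f(t_{M_2})$; in particular, there is an index $r\in\{M_1,M_2\}$ with $f(t_r)<f(t_{m_1})$. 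Then
$$\bigl(T_r-f(t_r)\bigr)-\bigl(T_{m_1}-f(t_{m_1})\bigr)\;=\;(M-m)+\bigl(f(t_{m_1})-f(t_r)\bigr)\;>\;M-m,$$
which forces $\max\bigl(|T_r-f(t_r)|,\,|T_{m_1}-f(t_{m_1})|\bigr)\geq \frac{M-m}{2}$, giving the claimed lower bound. The second alternated type (min--max--min) is handled by the symmetric argument with the roles of maxima and minima exchanged (equivalently, by replacing $T$ with $-T$, which preserves $\overline{\Fk}$).

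The step requiring the most care is the sign/monotonicity bookkeeping: one has to check that for either monotonicity direction of $f$, one of the two outer indices $M_1,M_2$ supplies a usable comparison against $m_1$. Once that is set up, the inequality is a one-line consequence of strict monotonicity, and there is no deeper analytical obstacle.
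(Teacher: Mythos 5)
Your proof is correct and rests on the same key fact as the paper's: a non-constant member of $\overline{\Fk}$ is strictly monotonic, so it cannot track an alternating max--min--max (or min--max--min) triple without incurring a residual of size at least $(\max T-\min T)/2$ somewhere. The only cosmetic difference is that you extract the bound directly from the residual gap between two of the three critical points, whereas the paper argues by comparison with $h$ (showing $f\geq h$ on $[t_{M_1},t_{M_2}]$ and then evaluating at $t_{m_1}$); the constant case and the second alternation type are handled identically in both.
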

\begin{proof}
Assuming that alternation in $T$ is a minimum between two maxima (proof for the 
other alternation is analogous), let $T_{M_1}$ and $T_{M_2}$ denote those maxima 
and $T_{m_1}$ the minimum, being ${M_1}<{m_1}<{M_2}$. Let $f$ be a non-constant 
exponential function and suppose that $f(\t)$ fits $T_{M_1}$ and $T_{M_2}$ better than  
$h(\t)$. This means that $|T_{M_1}-f(t_{M_1})|\leq T_{M_1}-h(t_{M_1})$ and 
$|T_{M_2}-f(t_{M_2})|\leq T_{M_2}-h(t_{M_2})$, and therefore 
$f(t_{M_1})\geq h(t_{M_1})=h(t_{M_2})$. Since $f$ is strictly monotonic, $f\geq h$  in 
$[t_{M_1},t_{M_2}]$. As $t_{m_1}\in (t_{M_1},t_{M_2})$, $f(t_{m_1})>h(t_{m_1})>T_{m_1}$ 
(this last inequality is true because of the definition of $h$), and 
$f(t_{m_1})-T_{m_1}>h(t_{m_1})-T_{m_1}$. As we are dealing with the max-norm, 
the last inequality makes $\|T-h(\t)\|_\infty<\|T-f(\t)\|_\infty$, and therefore 
$f(\t)$ is a worse approximation of $T$ than $h$.
\end{proof}

Proposition \ref{p_min_max_min_or_vice_versa} shows that an alternated $T$ does 
not deserve a closer look since its solution is trivial. Therefore, we are going 
to focus on the situations related with non-alternated $T$. Please observe 
that if $T$ is non-alternated we can face 4 possible cases, namely:
 \begin{enumerate}[label=Case \arabic*.,leftmargin=3\parindent]
\item $k<0$ and the maxima of $T$ are attained before its minima, so $a_k>0$.
\item $k<0$ and the minima of $T$ are attained before its maxima, so $a_k<0$.
\item $k>0$ and the maxima of $T$ are attained before its minima, so $a_k>0$.
\item $k>0$ and the minima of $T$ are attained before its maxima, so $a_k<0$.
\end{enumerate}
\begin{remark}\label{r_every_case_is_ok}
We can assume without loss of generality that Problem \ref{problem2} is on 
case 1; otherwise it can be reduced to that particular case.
\end{remark}
Indeed, cases 2, 3 and 4 can be reduced to case 1 through some symmetries: case 2 
reduces to case 1 with an $X$-axis symmetry, case 3 reduces to case 1 with an 
$X$-axis and a $Y$-axis symmetry and case 4 reduces to case 1 with a $Y$-axis 
symmetry. Those symmetries will change either the sign of $k$ ($Y$-axis symmetries) 
or the sign of $a$ and $b$ ($X$-axis symmetries) in the solution we seek. It is 
enough to revert back the changes once we find the solution for the symmetric case.
In spite of this, we need to determine the case that we are dealing with. 
For this, we can use the classical Remez Algorithm (see, e.g.,~\cite{ReemtsenRemez, 
Saramaki, Remez}) to find the line $f(t)=d+ct$ that best approximates $T$. Once 
we have determined the best linear approximation $f(t)=d+ct$, we have the following: 
\begin{lemma}
$T$ is in case 1 if and only if $d<0$ and there exist 
$i<j<l$ such that 
\begin{equation}\label{ijl}
T_i-f(t_i)=-(T_j-f(t_j))=T_l-f(t_l)=\|T-f(\t)\|_\infty,
\end{equation}
see~\rm{\cite[definition~2]{tac_mdpi}. }
\end{lemma}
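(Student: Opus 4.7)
The plan is to prove the equivalence in both directions by leaning on the classical Chebyshev characterization of best uniform linear approximations: since $\{1,t\}$ is a $2$-dimensional Haar system on the strictly increasing nodes $t_1<\cdots<t_n$, Remez's theorem guarantees that the optimal $(c,d)$ is characterized by the existence of indices at which the residual $T-f(\t)$ attains $\pm\|T-f(\t)\|_\infty$ with strictly alternating signs. Only two sign patterns on three nodes are possible, namely $(+,-,+)$ and $(-,+,-)$, and condition~(\ref{ijl}) picks out precisely the first of these.

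For the forward direction I would assume that $T$ is in case~1, so that the best element of $\Fk$ takes the form $a_k e^{kt}+b_k$ with $k<0$ and $a_k>0$; this function is strictly decreasing and strictly convex on $\R$, and every $T_i$ differs from $a_k e^{kt_i}+b_k$ by at most $\|T-f_k(\t)\|_\infty$. The task then reduces to showing that such convex-decreasing-shaped data forces the best affine approximant to equioscillate with sign pattern $(+,-,+)$: the chord above a strictly convex function produces positive residuals near the endpoints and a negative residual somewhere in the interior, and translating that chord to equalize the extremal deviations preserves that sign structure. The inequality $d<0$ would then come out of combining the negative slope forced by the monotonicity with the relative vertical position of the data.

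The backward direction assumes $d<0$ together with the equioscillation~(\ref{ijl}) at some $i<j<l$. Here I would rule out cases~2--4 one at a time via Remark~\ref{r_every_case_is_ok}: each of those cases is the image of case~1 under an $X$- or $Y$-axis symmetry, and the same symmetries either flip the sign of the slope, or flip the sign of the intercept, or exchange the patterns $(+,-,+)$ and $(-,+,-)$. Hence the joint conditions can only be compatible with case~1, and the forward direction singles this out.

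The step I expect to be the main obstacle is making the informal shape argument rigorous: one must show that the signs of the extremal residuals at only three alternating nodes actually force a strict convexity-like property on the whole dataset $T$ (not merely at those three points), and conversely that convex-decreasing data on $n$ arbitrary nodes always produces exactly the pattern $(+,-,+)$ at the three extremal nodes of the Remez solution. This is essentially the content of \cite[definition~2]{tac_mdpi}, which I would invoke directly rather than rebuild the underlying three-point classification from scratch.
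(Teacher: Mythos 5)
First, a point of reference: the paper offers no proof of this lemma at all---it is stated bare, with only a pointer to \cite[definition~2]{tac_mdpi}---so there is no in-paper argument to measure your proposal against; it has to stand on its own. Your plan correctly identifies the right tool (equioscillation for the best affine approximant over the Haar system $\{1,t\}$), but both directions have genuine gaps. In the forward direction you derive the pattern $(+,-,+)$ from the data being ``convex-decreasing-shaped'', justified by the fact that every $T_i$ lies within $\|T-f_k(\t)\|_\infty$ of the convex decreasing function $a_ke^{kt}+b_k$. That bound is not assumed small, so it imposes no convexity on $T$ whatsoever: case~1 only constrains where the global maxima and minima of $T$ sit relative to one another (plus the sign of the prescribed $k$). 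Such data can be wildly non-convex, and its best line can equioscillate at four or more nodes, in which case \emph{both} patterns $(+,-,+)$ and $(-,+,-)$ are realised simultaneously by triples of extremal indices: for $T=(0,1,-1,0)$ on $\t=(1,2,3,4)$ (maxima before minima, not alternated) the best line is $f(t)=-t/2+5/4$ with residuals $(-\tfrac34,\tfrac34,-\tfrac34,\tfrac34)$. So condition \eqref{ijl} does not ``pick out precisely'' one pattern, and the burden of separating case~1 from case~2 falls entirely on the sign of the slope. This example also exposes an issue you should resolve before proceeding: you read ``$d<0$'' as negativity of the slope, but the paper writes $f(t)=d+ct$, making $d$ the intercept, and the intercept above is $5/4>0$ while the slope is negative; either the lemma contains a typo or you are proving the wrong inequality.

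In the backward direction, the symmetry bookkeeping of Remark~\ref{r_every_case_is_ok} cannot do what you ask of it. Cases~1 and~3 consist of the \emph{same} data $(\t,T)$ (maxima before minima) and differ only in the sign of the prescribed $k$; the best affine approximant does not depend on $k$, so no condition on $(c,d)$ or on the residual pattern can possibly rule case~3 out. The lemma is only meaningful with $k<0$ taken as given, and your argument should say so and reduce to distinguishing case~1 from case~2, where the slope sign (transported by the $X$-axis symmetry $T\mapsto -T$) genuinely does the work---the pattern condition alone does not, by the example above. Finally, deferring ``the content'' of the hard step to \cite[definition~2]{tac_mdpi} is not available as a proof device: a definition classifies, it does not prove, and the implication from ``all maxima of $T$ precede all minima of $T$'' to ``the extremal residuals of the best line admit a $(+,-,+)$ triple and the line has negative slope'' is precisely the statement that still requires an argument.
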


Since Problem \ref{problem2} is in case 1, $T$ has its maxima lying before its 
minima, so $T$ must somehow have a decaying aspect. It seems clear that an 
exponential function $I_h \equiv ae^{kt}+b$, with $k<0$, should have $a>0$ to 
replicate that decaying aspect of $T$. In that way, some $I_h(\t)\in \Fk$ could 
provide a better approximation than $h(\t)\in \Fk$. Let us construct $I_h$.

Let $t_m$ be the first coordinate where $T$ attains its minimum and $t_M$ the 
last coordinate where $T$ attains its maximum (observe that $m>M$). 
This implies $T_m<T_i, \; \forall\, i<m$ and $T_i<T_M, \; \forall\, i>M$.

Consider any $T_i$ and $T_j$. For this $k$, there exist a unique exponential 
function $a_{ij}e^{kt}+b_{ij}$ interpolating $(t_i,T_i)$ and $(t_j,T_j)$. It 
is immediate that the value $a_{ij}$ is given by the expression
\begin{equation}\label{eq_aij}
a_{ij}=\frac{T_i-T_j}{e^{kt_i}-e^{kt_j}}.
\end{equation}

Let us consider the $a_{im}$ coefficients of exponentials interpolating $(t_m,T_m)$ 
and $(t_i,T_i)$ for every $i \neq m$. Please note that, by \eqref{eq_aij}, $a_{im} 
\leq 0,\;$ $\forall\, i > m$ and $a_{im} > 0,\;$ $\forall\, i < m$. Remind we are looking 
for $a$ to be positive in $I_h$, so we are going to consider just $a_{im}$ $\forall\, i < m$.

In a similar way, let us consider the $a_{jM}$ coefficients of exponentials 
interpolating $(t_M,T_M)$ and $(t_j,T_j)$ for $j \neq M$. This time, $a_{jM} 
\leq 0,$ $\forall\, j < M$ and $a_{jM} > 0,$ $\forall\, j > M$, so, for the same 
reason, we are going to consider just $a_{jM},$ $\forall\, j>M$.

In order to determine $I_h$ we are going to consider the coefficients 
\begin{eqnarray}\label{eqnIh}
a:=&\min\{ a_{im}, a_{jM}, \text{ with } i<m \text{ and } j>M \}, \\ 
b_m:=&T_m-ae^{kt_m} \text{\quad and }\quad b_M:=T_M-ae^{kt_M}. \nonumber
\end{eqnarray}
Note that $a>0$. Also, 
observe that $(t_m,T_m)$ and $(t_M,T_M)$ are, respectively, on the 
graph of $I_h^m \equiv ae^{kt}+b_m$ and $I_h^M \equiv ae^{kt}+b_M$.

The following Theorem can be considered as the cornerstone of this section. 
Please note that we are not claiming such element to be the best 
approximation of $T$, that is, the solution of Problem \ref{problem2}.

\begin{theorem}[Constructive method]\label{t_exp_constructive}
Let Problem $\ref{problem2}$ be on case $1$. Then, the function given by 
$I_h(t)=ae^{kt}+(b_m+b_M)/2$ and whose coefficients are defined 
in~$(\ref{eqnIh})$ improves the approximation of $h(\t)$.
\end{theorem}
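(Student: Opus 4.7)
The plan is to rephrase the pointwise errors of $I_h$ in a one-variable form that makes the midpoint structure transparent. Setting $c_i := T_i - a e^{kt_i}$, the definitions in~\eqref{eqnIh} give $b_m = c_m$ and $b_M = c_M$, so that
\[
T_i - I_h(t_i) = c_i - \frac{c_m + c_M}{2}, \qquad i = 1,\ldots,n.
\]
If I can show that every $c_i$ lies in the interval $[c_m, c_M]$, then $\|T - I_h(\t)\|_\infty \leq (c_M - c_m)/2$ at once. Since $\|T - h(\t)\|_\infty = (T_M - T_m)/2$ (with $T_M = \max T$, $T_m = \min T$) and
\[
\frac{c_M - c_m}{2} = \frac{T_M - T_m}{2} - \frac{a\bigl(e^{kt_M} - e^{kt_m}\bigr)}{2},
\]
the strict improvement will follow immediately from $a > 0$, $k < 0$ and $t_M < t_m$.

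The remaining task is therefore the two-sided bound $c_m \leq c_i \leq c_M$, which I would establish by splitting each direction at the relevant extremum. For the lower inequality, if $i > m$ then $c_i - c_m = (T_i - T_m) - a(e^{kt_i} - e^{kt_m})$ is a sum of two nonnegative quantities, since $T_m = \min T$ and $e^{kt_i} < e^{kt_m}$. If $i < m$ I would use the interpolation identity $T_i - T_m = a_{im}(e^{kt_i} - e^{kt_m})$ to rewrite
\[
c_i - c_m = (a_{im} - a)\bigl(e^{kt_i} - e^{kt_m}\bigr);
\]
the bracket is positive and $a \leq a_{im}$ by the very definition of $a$ in~\eqref{eqnIh}, so $c_i \geq c_m$. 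The bound $c_i \leq c_M$ is entirely symmetric: a direct sign argument handles $i < M$, and for $i > M$ the analogous interpolation identity together with $a \leq a_{jM}$ yields the conclusion.

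The one subtlety worth flagging is the middle range $M < i < m$, where neither direct sign argument is available in either direction, so both halves of the two-sided bound have to come from interpolation: $c_i \geq c_m$ from $a \leq a_{im}$ and $c_i \leq c_M$ from $a \leq a_{iM}$. This is precisely why $a$ in~\eqref{eqnIh} is taken as the minimum over the union of the two families $\{a_{im}: i < m\}$ and $\{a_{jM}: j > M\}$; restricting the minimum to only one of them would break the argument in this middle region and is the main obstacle the construction is designed to overcome.
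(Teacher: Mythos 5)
Your proof is correct and is essentially the paper's own argument in different clothing: your two-sided bound $c_m \le c_i \le c_M$ is exactly the paper's containment $I_h^m(t_i) \le T_i \le I_h^M(t_i)$ (just with $ae^{kt}$ subtracted off), it is established by the same case split (direct sign comparison on one side of the extremum, minimality of $a$ over the interpolation slopes on the other), and your final comparison of $(c_M-c_m)/2$ with $(T_M-T_m)/2$ is the paper's computation $I_h(t_M)=h(t_M)+\frac{a}{2}(e^{kt_M}-e^{kt_m})>h(t_M)$. The only cosmetic difference is that the paper treats the degenerate case $a=a_{mM}$ separately, whereas your formulation absorbs it.
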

\begin{proof}

An instant to note that if $a=a_{mM}$, then $I_h^m=I_h^M$, and defining $I_h:=I_h^m$, 
we have that $I_h(\t)$ interpolates $T$, thus solving Problem \ref{problem2} and 
Problem \ref{problem1}. Therefore we assume $a \neq a_{mM}$.

Whenever $a \neq a_{mM}$, $T$ will remain between the bands constituted 
by $I_h^m$ and $I_h^M$, that is, $I_h^m (t_i) \leq T_i \leq I_h^M(t_i), $ 
$\forall\, i=1,\ldots,n$. Let us prove the first inequality: given 
$l \geq m$, inequality $I_h^m(t_l) \leq T_l$ is trivial. Given $l<m$, inequality 
$I_h^m(t_l) \leq T_l$ must also hold, otherwise $T_l<I_h^m(t_l)$ implies 
$a_{lm}<a$, see \eqref{eq_aij}, which contradicts the definition of $a$. 

The second part of the inequality is symmetric: given $l \leq M$, inequality 
$T_l \leq I_h^M(t_l)$ is trivial. Given $l>M$, inequality $T_l \leq I_h^M(t_l)$ 
must also hold, otherwise $I_h^M(t_l)<T_l$ implies $a_{lM}<a$, see 
\eqref{eq_aij}, which, again, contradicts the choice of $a$.

It is obvious that $I_h:=\frac{1}{2}(I_h^m+I_h^M)\in \overline{\F_k}$ is equidistant 
from $I_h^m$ and $I_h^M$. Reminding $(t_m,T_m)$ and $(t_M,T_M)$ are, respectively, 
on the graph of $I_h^m$ and $I_h^M$, the following equalities hold:
\begin{equation}\label{eq_max_norm}
\| T-I_h(\t) \|_\infty=T_M- I_h(t_M)=-(T_m-I_h(t_m)). 
\end{equation}

Note that $I_h \equiv ae^{kt}+b$, being $b=\frac{1}{2}(b_m+b_M)$. Once $I_h$ has 
been constructed, all is left to prove is that $I_h(\t)$ fits $T$ better than 
$h(\t)$, that is, $\| T-I_h(\t) \|_\infty<\| T-h(\t) \|_\infty$. The inequality 
is immediate because $\| T-I_h(\t) \|_\infty=T_M-I_h(t_M)$, $\| T-h(\t) 
\|_\infty=T_M-h(t_M)$ and $T_M-I_h(t_M)<T_M-h(t_M)$ as can be checked by
\begin{equation*}\label{eq_f_above_h}
\begin{split}
 I_h(t_M) & =ae^{kt_M}+b=ae^{kt_M}+\frac{1}{2}(b_m+b_M)\\
 & = ae^{kt_M}+\frac{1}{2}\big((T_m-ae^{kt_m})+(T_M-ae^{kt_M})\big) \\ 
 & = \frac{T_M+T_m}{2}+ \frac{a}{2} (e^{kt_M}-e^{kt_m}) = 
 h(t_M) + \frac{a}{2} (e^{kt_M}-e^{kt_m})\\
 & > h(t_M).
\end{split}
\end{equation*}
\end{proof}


As the last proof shows, see \eqref{eq_max_norm}, the $\max$-norm is 
attained in $t_m$ and $t_M$. Now we are going to prove that the  
$\max$-norm is going to be attained in at least another observation 
$(\tilde{t},\tilde{T})$. In more detail, 
\begin{corollary}\label{r_3_points} In the hypotheses of Theorem 
\ref{t_exp_constructive}, either 
\begin{equation}\label{eq_min_max_min}
\| T-I_h(\t) \|_\infty=T_M- I_h(t_M)=-(T_m-I_h(t_m))=-(\tilde{T}-I_h(\tilde{t})), 
\end{equation}
with $\tilde{t}<t_m$ or
\begin{equation}\label{eq_max_min_max}
\| T-I_h(\t) \|_\infty=T_M- I_h(t_M)=-(T_m-I_h(t_m))=\tilde{T}-I_h(\tilde{t}),
\end{equation}
with $\tilde{t}>t_M$.
Furthermore, any observation $(\tilde{t},\tilde{T})$ such that 
$\| T-I_h(\t) \|_\infty=-(\tilde{T}-I_h(\tilde{t}))$ fulfils $\tilde{t} \leq t_m$ 
and any observation $(\tilde{t},\tilde{T})$ such that 
$\| T-I_h(\t) \|_\infty=\tilde{T}-I_h(\tilde{t})$ fulfils $\tilde{t} \geq t_M$.
\end{corollary}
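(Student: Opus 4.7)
The plan is to exploit the fact that the coefficient $a$ from~(\ref{eqnIh}) is a minimum over the finite set $\{a_{im}, a_{jM} : i<m,\, j>M\}$, hence attained: either there exists $\tilde{i}<m$ with $a=a_{\tilde{i}m}$, or there exists $\tilde{j}>M$ with $a=a_{\tilde{j}M}$. In the first situation, by definition of $a_{\tilde{i}m}$ the function $I_h^m=ae^{kt}+b_m$ is the unique exponential in $\overline{\F_k}$ interpolating $(t_{\tilde{i}},T_{\tilde{i}})$ and $(t_m,T_m)$, so $T_{\tilde{i}}=I_h^m(t_{\tilde{i}})$; symmetrically, in the second situation $T_{\tilde{j}}=I_h^M(t_{\tilde{j}})$. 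This dichotomy is what drives the ``either/or'' in the statement.

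Next I would use the elementary identity
\[
I_h^m - I_h \;=\; I_h - I_h^M \;=\; \tfrac{1}{2}(b_m-b_M),
\]
which is a \emph{constant} function of $t$, because $I_h:=\tfrac{1}{2}(I_h^m+I_h^M)$ and the $ae^{kt}$ terms cancel. By~\eqref{eq_max_norm} this constant equals $-\|T-I_h(\t)\|_\infty$. Evaluating at $\tilde{t}=t_{\tilde{i}}$ in the first situation gives
\[
T_{\tilde{i}}-I_h(t_{\tilde{i}}) \;=\; I_h^m(t_{\tilde{i}})-I_h(t_{\tilde{i}}) \;=\; \tfrac{1}{2}(b_m-b_M) \;=\; -\|T-I_h(\t)\|_\infty,
\]
which, combined with~\eqref{eq_max_norm}, is exactly~\eqref{eq_min_max_min}. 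An identical evaluation of the mirror identity at $\tilde{t}=t_{\tilde{j}}$ delivers~\eqref{eq_max_min_max} in the second situation.

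For the ``furthermore'' part, I would read the equality $\|T-I_h(\t)\|_\infty=-(\tilde{T}-I_h(\tilde{t}))$ backwards: via the same constant identity it rearranges to $\tilde{T}=I_h^m(\tilde{t})$. The proof of Theorem~\ref{t_exp_constructive} already established $I_h^m(t_l)\leq T_l$ for every $l$; the observation to add is that this inequality is \emph{strict} whenever $l>m$, since $I_h^m$ is strictly decreasing (as $k<0$ and $a>0$) and therefore $I_h^m(t_l)<I_h^m(t_m)=T_m\leq T_l$, using that $T_m$ is a global minimum. Hence $\tilde{T}=I_h^m(\tilde{t})$ forces $\tilde{t}\leq t_m$. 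The companion claim $\tilde{t}\geq t_M$ follows from the mirror-image argument applied to $I_h^M$.

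There is no real obstacle; the work is all bookkeeping. The only delicate point is turning the minimum-achieving structure of~\eqref{eqnIh} into an exact interpolation identity at a third data point, which is handled in the first paragraph once and for all by the dichotomy on where the minimum is attained.
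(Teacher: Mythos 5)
Your proof is correct and follows essentially the same route as the paper's: the either/or comes from the dichotomy on whether the minimum defining $a$ in~(\ref{eqnIh}) is attained as some $a_{im}$ or some $a_{jM}$ (yielding a third interpolation point on $I_h^m$ or $I_h^M$), and the ``furthermore'' part follows from strict monotonicity of the exponential together with $T_m$, $T_M$ being global extrema. You merely make explicit the constant-difference identity $I_h^m-I_h=\tfrac12(b_m-b_M)=-\|T-I_h(\t)\|_\infty$ that the paper leaves implicit.
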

\begin{proof}
If $a=a_{im}$, then $I_h^m(t_m)=T_m$, $I_h^m(t_i)=T_i$, and, taking $\tilde{t}=t_i$, 
\eqref{eq_min_max_min} holds. If  $a=a_{jM}$, then $I_h^M(t_M)=T_M$, $I_h^M(t_j)=T_j$, 
and, taking $\tilde{t}=t_j$, \eqref{eq_max_min_max} holds.

As for the furthermore part, since $T_m$ is the first minimum and $I_h$ a strictly 
decreasing function, $-(T_m-I_h(t_m))>-(\tilde{T}-I_h(\tilde{t}))$ for $\tilde{t}>t_m$. 
An analogous reasoning give us $T_M- I_h(t_M)>-(\tilde{T}-I_h(\tilde{t}))$ 
for $\tilde{t}<t_M$, so we are done. 
\end{proof}

%


\begin{proposition}\label{c_M_m_unchanged}
Considering $T$ in case $1$ then, $T-I_h(\t)$ is either in case $1$ or alternated.
\end{proposition}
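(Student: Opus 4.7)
The plan is to invoke Corollary~\ref{r_3_points} to pin down the global extrema of the residual $r := T - I_h(\t)$, observe that they are already in the ``case 1'' order, and then eliminate the remaining alternative by a one-line ordering comparison.

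First, from~\eqref{eq_max_norm} one reads off $r_M = T_M - I_h(t_M) = \|r\|_\infty$ and $r_m = T_m - I_h(t_m) = -\|r\|_\infty$, so index $M$ is a global maximum point of $r$ while $m$ is a global minimum point. Since $T$ is assumed in case~1, the data's maxima precede its minima, giving $M<m$; thus the particular maximum and minimum just located are already in the order ``max before min''.

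Second, if $r$ happens to be alternated the statement holds with nothing more to do, so I would assume that $r$ is non-alternated. Since the sign of $k$ is the same one used to build $I_h$, namely $k<0$, the four-case classification from the section applies to $r$ and forces it to lie in either case~1 or case~2. Case~2 would require every global minimum of $r$ to be attained strictly before every global maximum of $r$; but the pair of indices $(m,M)$ already witnesses the opposite inequality $M<m$, so case~2 is impossible and $r$ must be in case~1.

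The only conceptual point that deserves a line of explanation is the claim that non-alternation together with a fixed sign of $k$ really does leave only case~1 and case~2 on the table (rather than some hybrid). This is a direct unpacking of the definition of \emph{alternated}: if $r$ possessed both a global maximum preceding some global minimum and another global maximum following some global minimum, a global minimum would sit between two global maxima, precisely the pattern alternation forbids. The symmetric argument rules out any interleaving of minima around a maximum. Hence non-alternation forces all global maxima of $r$ to lie entirely on one side of all global minima, which is exactly the case~1 vs.\ case~2 dichotomy; the comparison $M<m$ then selects case~1.
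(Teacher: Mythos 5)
Your argument for the ordering dichotomy is correct and is in essence the paper's own: both proofs start from~\eqref{eq_max_norm} to place a global maximum of the residual at index $M$ and a global minimum at index $m$ with $M<m$, and both then conclude that, absent alternation, all maxima must precede all minima. You actually make the key combinatorial step more explicit than the paper does --- the paper compresses it into ``unless an alternation exists (minima and maxima mix)'', whereas you spell out why non-alternation forces the set of global maxima to lie entirely on one side of the set of global minima, so that the single witness $M<m$ selects case~1 over case~2. (Incidentally, you announce Corollary~\ref{r_3_points} in your plan but never actually need it; the paper invokes its ``furthermore'' part, which localizes the extrema relative to $t_m$ and $t_M$, but that localization is not what closes the argument in either proof.)

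The one point you omit is the last sentence of the paper's proof: the verification that the coefficient $a_k$ of the best approximation satisfies $a_k\geq a$, where $a$ is the quantity~\eqref{eqnIh} subtracted off in $I_h$, so that the residual's optimal coefficient $a_k-a$ is still nonnegative. You implicitly read the clause ``so $a_k>0$'' in the definition of case~1 as an automatic consequence of the ordering of extrema, which is a defensible reading of the wording; but the paper's own proof treats it as something to be checked, and it matters downstream --- Theorem~\ref{t_exp_constructive} is to be re-applied to the residual, and Theorem~\ref{t_best_aprox_coincide} needs the accumulated coefficients not to overshoot the optimal one. If you keep your proof, you should either justify that the sign condition follows from the ordering condition for non-alternated data with $k<0$, or add the paper's comparison $a_k\geq a$ explicitly.
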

\begin{proof}
As stated in \eqref{eq_max_norm}, $\| T-I_h(\t) \|_\infty=T_M-I_h(t_M)=-(T_m-I_h(t_m))$. 
Therefore, $T-I_h(\t)$ has a maximum in $t_M$ and a minimum in $t_m$.

Attending to Corollary \ref{r_3_points}, when there is an instant $\tilde{t}$ where 
the norm is attained as $\| T-I_h(\t) \|_\infty=-(\tilde{T}-I_h(\tilde{t}))$, 
$\tilde{t}$ is located before $t_m$ (that means $(\tilde{t},\tilde{T})=(t_r,T_r)$ 
for certain $r<m$). On the same way, for any instant $\tilde{t}$ where the norm 
  is attained as $\| T-I_h(\t) \|_\infty=\tilde{T}-I_h(\tilde{t})$, $\tilde{t}$ 
  is located after $t_M$. Please remind that Corollary \ref{r_3_points} ensures 
  such instant $\tilde{t}$ exist.

Having into account that
\begin{itemize}
\item $t_m$ and $t_M$ are instants where a minimum and a maximum 
of $T-I_h(\t)$ are located, respectively, 
\item $t_M<t_m$, 
\item If a minimum of $T-I_h(\t)$ is attained in $\tilde{t}$ then $\tilde{t} \leq t_m$, 
\item If a maximum of $T-I_h(\t)$ is attained in $\tilde{t}$ then $\tilde{t} \geq t_M$,
\end{itemize}
we can conclude that, unless an alternation exists (minima and maxima mix), 
any maximum of $T-I_h(\t)$ lies before all minima. Also, it is clear that the 
parameter $a_k$ of the best approximation is $a_{ij}$ for some pair of indices, 
so $a_k$ is greater than the expression~(\ref{eqnIh}) and this implies that 
$T-I_h(\t)$ is in case 1.
\end{proof}

In Theorem \ref{t_exp_constructive} we have improved the approximation of $h(\t)$ 
with an exponential $I_h(\t)$. It is time to iterate this improvement. 
Please recall that $k<0$ is fixed. 

Suppose $T$ in case $1$ and $I_h$ is not its best approximation. 
Then $(\t,T-I_h(\t))$ is also in case $1$, so the function 
prescribed by Theorem \ref{t_exp_constructive} (with the same $k$), say 
$I_{h_1}$, is better than the approximation given by $h(\t)=0$---observe that 
0 is the constant that approximates $(\t,T-I_h(\t))$ the best. So, 
\begin{equation}\label{Fundamental}
\|T-(I_h(\t)+I_{h_1}(\t))\|_\infty
=\|(T-I_h(\t))+I_{h_1}(\t)\|_\infty
<\|T-I_h(\t)\|_\infty
\end{equation}
and we obtain that $I_h+I_{h_1}$ is a strictly better approximation to 
$T$ than $I_h$. 
%

This procedure of successively approximating the remainders is straightforward: we 
keep obtaining exponential approximations of the successive remainders as shown 
in Theorem \ref{t_exp_constructive} until we reach an alternated remainder. Then, 
this succesive sum of exponentials is the best approximation of $T$, as we will prove 
in the next result. For this next final result we are going to consider $T$ in 
case 1 and such that the solution of Problem \ref{problem2} exists.

\begin{theorem}\label{t_best_aprox_coincide}
The procedure of successively approximating the remainders locates, in a finite 
number of steps, the solution of Problem \rm{\ref{problem2}}.
\end{theorem}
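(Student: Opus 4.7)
The plan is to iterate the construction of Theorem~\ref{t_exp_constructive} and track what happens to the successive remainders. Set $R_0 := T$ and, recursively, $R_{j+1} := R_j - I_{h_{j+1}}(\t)$, where $I_{h_{j+1}} \in \overline{\Fk}$ is the exponential produced by Theorem~\ref{t_exp_constructive} applied to $R_j$. Since $\overline{\Fk}$ is a vector space, the cumulative approximation $F_j := \sum_{i=1}^{j} I_{h_i}$ lies in $\overline{\Fk}$ for every $j$. By Proposition~\ref{c_M_m_unchanged}, each $R_j$ is either in case~$1$ (in which case Theorem~\ref{t_exp_constructive} produces a strictly better next iterate) or alternated (in which case the procedure halts). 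I will prove \emph{(A)} that, when the procedure halts at some step $N$, $F_N(\t)$ is the best approximation of $T$ in $\Fk$; and \emph{(B)} that the procedure halts after finitely many steps.

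For \emph{(A)}, the construction of $I_{h_N}$ from $R_{N-1}$ together with equation~(\ref{eq_max_norm}) yields $\max R_N = -\min R_N$, so the best constant approximation of $R_N$ in the $\max$-norm is the zero function. Since $R_N$ is alternated, Proposition~\ref{p_min_max_min_or_vice_versa} asserts that this zero function is in fact the best approximation of $R_N$ within all of $\overline{\Fk}$. Decomposing any competitor $G \in \overline{\Fk}$ as $G = F_N + g$ with $g := G - F_N \in \overline{\Fk}$, we obtain
\[
\|T - G(\t)\|_\infty = \|R_N - g(\t)\|_\infty \geq \|R_N\|_\infty = \|T - F_N(\t)\|_\infty,
\]
so $F_N(\t)$ solves Problem~\ref{problem2}.

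For \emph{(B)}, I use a monotone-drift argument on the extremal indices of the remainders. Let $M^{(j)}$ denote the largest index at which $R_j$ attains its maximum, and $m^{(j)}$ the smallest index at which it attains its minimum, so $M^{(j)} < m^{(j)}$ whenever $R_j$ is in case~$1$. Applying Corollary~\ref{r_3_points} to the step $R_j \to R_{j+1}$, one obtains a third extremal index $\tilde{i}$ for $R_{j+1}$: either $R_{j+1}(t_{\tilde{i}}) = \min R_{j+1}$ with $\tilde{i} < m^{(j)}$, forcing $m^{(j+1)} \leq \tilde{i} < m^{(j)}$; or $R_{j+1}(t_{\tilde{i}}) = \max R_{j+1}$ with $\tilde{i} > M^{(j)}$, forcing $M^{(j+1)} \geq \tilde{i} > M^{(j)}$. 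Equation~(\ref{eq_max_norm}) guarantees that $t_{M^{(j)}}$ and $t_{m^{(j)}}$ persist as extremal points of $R_{j+1}$, so at least one coordinate of $(M^{(j)}, m^{(j)})$ drifts strictly outward at each non-terminating iteration. Since $1 \leq M^{(j)} < m^{(j)} \leq n$, at most $n-2$ non-terminating iterations can occur before the remainder is forced to be alternated.

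I expect the main subtlety to be verifying the strict drift in~\emph{(B)}, that is, confirming the new extremal index $\tilde{i}$ is genuinely distinct from both $M^{(j)}$ and $m^{(j)}$. The degenerate case $a = a_{m^{(j)} M^{(j)}}$, in which Theorem~\ref{t_exp_constructive} terminates with $I_h$ interpolating $R_j$ exactly and thereby solving Problem~\ref{problem2} in a single step, is handled inside the proof of Theorem~\ref{t_exp_constructive} itself; outside this case the strict inequalities $\tilde{t} < t_{m^{(j)}}$ or $\tilde{t} > t_{M^{(j)}}$ guaranteed by Corollary~\ref{r_3_points} deliver a genuine outward drift, and combining~\emph{(A)} with the resulting finite termination completes the argument.
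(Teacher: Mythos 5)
Your proof is correct and follows essentially the same route as the paper's: Proposition~\ref{c_M_m_unchanged} keeps the remainders in case~1, Corollary~\ref{r_3_points} forces the first-minimum index to decrease or the last-maximum index to increase at each step (hence termination in at most $n-2$ iterations), and Proposition~\ref{p_min_max_min_or_vice_versa} applied to the final alternated remainder gives optimality. Your part~\emph{(A)}, with the explicit decomposition $G=F_N+g$ in the vector space $\overline{\Fk}$, usefully spells out the optimality step that the paper's proof leaves implicit.
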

\begin{proof}
Proposition \ref{c_M_m_unchanged} ensures that, until we find a remainder with 
alternating maxima and minima, the end of procedure, the remainder will be 
in case 1 after every iteration. Because of the way we construct every new 
exponential approximation, in each step we find, at least, either a new 
minimum before the previous ones (case $a=a_{im}$ for some $i<m$) or a new 
maximum after the previous ones (case $a=a_{jM}$ for some $j>M$). Anyway, 
in each step the instants where the first minimum and the last maximum 
are attained get closer and, in finite steps, obviously lower than $m-M+1$, the 
remainder becomes alternated, which means the procedure is over and that the 
result is optimal. 
%
%
\end{proof}

\begin{algorithm}[Successive approximation]\label{algorithm_remainders} 
{\tt Procedure successive\_remainders}\\ ($T=(T_1,\ldots,T_n)$ a vector of observations, 
$\t=(t_1,\ldots,t_n)$ the instants of time, $k < 0$ fixed value.) 
\begin{enumerate}
\item $a:=0$.
\item $b:=(\max(T)+\min(T))/2$.
\item $m:=\min \{ i \in \{ 1,\ldots,n \} \,:\, T_i = \min T\}$.
\item $M:=\max \{ i \in \{ 1,\ldots,n \} \,:\, T_i = \max T\}$.
\item \textbf{While} $M<m$:
  \begin{enumerate}[label={\arabic{enumi}.\arabic{enumii}}]
    \item\label{defi} $f:=ae^{kt}+b$.
    \item $x:=T-f$.
    \item $m_2=\min \{ j\in \{ 1,\ldots,m-1 \} \,:\, a_{jm} = 
    \min_{i=1,\ldots,m-1} \{ a_{im} = \frac{x_i-x_m}{e^{kt_i}-e^{kt_m}}  \} \}$.
    \item $M_2=\max \{ i\in \{ M+1,\ldots,n \} \,:\, a_{im} =  
    \min_{j=M+1,\ldots,n} \{ a_{jM} = \frac{x_j-x_M}{e^{kt_j}-e^{kt_M}}  \} \}$.
    \item $\bar{a}:=\min\{ a_{m_2 m}, a_{M_2 M} \}$.
    \item $a:=a+\bar{a}$.
    \item $b:=b+\frac{1}{2}(x_M-\bar{a}e^{kt_M}+x_m-\bar{a}e^{kt_m})$.
    \item \textbf{If} $a_{m_2 m} < a_{M_2 M}$ \textbf{then} $m:=m_2$, \textbf{else} $M:=M_2$.
  \end{enumerate}
\item Print($a,b$).
\end{enumerate}
\end{algorithm}

Please observe that this algorithm depends on the ability to determine whether 
$T$ is in case 1 or not---provided that all the maxima lie before the minima. 
Recall that Remez algorithm suffices for this, 
but also another version of Algorithm~\ref{algorithm_remainders} with $f:=at+b$ 
instead of the expression~\ref{defi} and taking into account that $a<0$. 

\section{Best approximation for Problem \ref{problem1}} \label{s_every_k}
Once we know how to solve Problem \ref{problem2}, which is explained in 
Section \ref{s_fixed_k}, we will solve Problem \ref{problem1}. Since the 
solution of Problem \ref{problem1} is also the solution of Problem \ref{problem2} 
for certain $k$, we just need to find the optimal $k$. The error 
function, $\EI (k)= \| T-f_k(\t)  \|_\infty$, where 
$f_k(\t)$ is the solution of Problem \ref{problem2} for every $k$, has a 
kind property: the quasiconvexity, as proven in \cite[theorem 2.17]{tac}. 
This property allows us to apply a variety of algorithms to approximate its 
minimum. The algorithm we present is a grid-search algorithm. We need to 
provide an interval where the optimal $k$ will be and, with every iteration, 
the algorithm will reduce the interval until it estimates the value that 
minimize the error function. In particular, the algorithm samples the error 
function in $d$ different points within the given interval and seek which 
one of them provides the minimum error. Then, it reduces the interval, 
centers it in the value that gives the minimum error and repeat the process 
until the interval is smaller than a stop condition.

\begin{algorithm}[Searching for the best $k$]\label{algorithm_best_approximation}
{\tt Procedure best\_approximation}\\ 
($T$ a vector of observations, 
$\t$ the instants of time, $[a_0,b_0]$ the interval where to 
seek the values of $k$, $d$ the number of samples, $f_k$ the best approximation 
of $T$ for $k$---obtained with~\ref{algorithm_remainders}---, 
$\varepsilon$ the stop condition.) 
\begin{enumerate}
\item \textbf{For every} $i \in \{1,\ldots,d\}$\textbf{:} 
$k_i := a_0+(i-1)\frac{b_0-a_0}{d-1}$,  $E_i := \| T-f_{k_i}(\t)  \|_\infty$.
\item $m := \min \{i\in\{1,\ldots,d\} \,:\, E_i = \min\{ E_1,\ldots,E_d \} \}$.
\item $l:=\frac{b_0-a_0}{d-1}$
\item \textbf{While} $l>\varepsilon$\textbf{:}
\begin{enumerate}[label={\arabic{enumi}.\arabic{enumii}}]
\item $a:=\max \{ k_m -l, a_0 \}$.
\item $b:=\min \{ k_m +l, b_0 \}$.
\item \textbf{For every} $i \in \{1,\ldots,d\}$; $k_i:=a+(i-1)\frac{b-a}{d-1}$,  
$E_i:= \| T-f_{k_i}(\t)  \|_\infty$.
\item $m := \min \{i\in\{1,\ldots,d\} \,:\, E_i = \min\{ E_1,\ldots,E_d \} \}$.
\item $l:=\frac{b-a}{d-1}$
\end{enumerate}
\item $k:=k_m$
\item Print($a,b,k$).
\end{enumerate}
\end{algorithm}

\section{Concluding remarks}

Algorithm~\ref{algorithm_best_approximation}, as described, will {\em always} 
converge to the solution. Namely, for each $k$ we obtain the exact values of 
$a$ and $b$ and the quasiconvexity of the error function assures that any 
{\em reasonable} algorithm will converge to the correct $k$. 

But there is another way to interpret what we have proved. Suppose that the 
best approximation $f(t)=ae^{k_0t}+b$ fulfils that there are exactly four 
indices where the equality 
\begin{equation}\label{eqn4}
|f(t_i)-T_i|=\|f(\t)-T\|_\infty
\end{equation}
holds, say $\{i_1,i_2,i_3,i_4\}$ is the set of {\em critical indices}. 
There must exist at least four indices, but 
there could be more---although that case is not easy to find since the usual 
datasets only contain rational numbers and the functions may take any real 
number. As shown in~\cite[corollary~1]{tac_mdpi}, the map that assigns to each 
$k$ the vector $f_k(\t)$ is continuous. In particular, it is clear that for 
some $\e>0$ and every $k\in(k_0-\e,k_0+\e)$ the three critical indices 
belong to the set $\{i_1,i_2,i_3,i_4\}$. 

As the reader can see in~\cite[lemma~3]{tac_mdpi}, the changes in the sets of 
critical indices happen at values of $k$ where there are four critical indices. 
So, take some $\cl{k}\in\R$ and $\e>0$. A moment's thought suffices to 
realize that, if the alternation of $T-f_{k}(\t)$ is 
\begin{equation}\label{los3.1}
f_k(t_{p_1})-T_{p_1}=-(f_k(t_{j_1})-T_{j_1})=f_k(t_{l_1})-T_{l_1}=\|f_k(\t)-T\|_\infty
\end{equation}
for every $k\in(\cl{k},\cl{k}+\e)$, then there is no way that any indices fulfil
\begin{equation}\label{los3.2}
-(f_{\cl{k}}(t_{p_2})-T_{p_2})=f_{\cl{k}}(t_{j_2})-T_{j_2}=
-(f_{\cl{k}}(t_{l_2})-T_{l_2})=\|f_{\cl{k}}(\t)-T\|_\infty
\end{equation}
unless in $\cl{k}$ we have the four alternated critical indices that ensure 
that $\cl{k}$ is optimal. So, when $k\to \cl{k}^+$ we have exactly three options: 
\begin{enumerate}
\item The alternance stays as $\max$-$\min$-$\max$. 
\item The alternance stays as $\min$-$\max$-$\min$. 
\item The value $\cl{k}$ is optimal. 
\end{enumerate}
So, the alternations in some $k_1$ and $k_2$ are different if and only if the 
optimal $k$ is between them.  
This allows us to determine whether the interval chosen at the beginning of 
Algorithm~\ref{algorithm_best_approximation} contains the optimal $k_0$ or not. 
Moreover, if $T$ is in case 1 and for some $k<0$ the alternance of $T-f_k(\t)$ 
is the same as that of $T-r(\t)$ (that is, $\max$-$\min$-$\max$), then the 
optimal $k_0$ belongs to $(-\infty,k)$. 

Ultimately, the goal of this approximation algorithm is easy to achieve if we 
know the indices where we need to look. Namely, if the critical indices 
are $\{i_1,i_2,i_3,i_4\}$, then the solution is the one that can be seen 
at the end of the proof of~\cite[lemma~2]{tac_mdpi}: $f(t)=b+a\exp(kt)$ with 
\begin{eqnarray}\label{los4}
a=&\frac{T_{i_1}-T_{i_3}}{\exp(kt_{i_1})-\exp(kt_{i_3})}\stackrel{*}=
\frac{T_{i_2}-T_{i_4}}{\exp(kt_{i_2})-\exp(kt_{i_4})}, \\
b=&\frac 12(T_{i_1}-a\exp(kt_{i_1})+T_{i_2}-a\exp(kt_{i_2})), \nonumber
\end{eqnarray}
observe that the {\em starred} equality $\stackrel{*}=$ determines $k$. 

So, in almost any case, we may find analytically the solution of Problem~\ref{problem1}. 
However, we have avoided to include this in the previous sections because there 
is no {\em a priori} way to ensure that $k_1$ and $k_2$ are close enough. In 
spite of this, if the approximation given by~\cite[lemma~2]{tac_mdpi} for some 
$\{i_1,i_2,i_3,i_4\}$, say $f(\t)$, leaves every other $(t_i,T_i)$ closer to 
$f(t_i)$, then of course it is the best approximation. 

Another reason to avoid this complement to the algorithm is that we have 
actually been able to approximate some datasets with more precision with 
Algorithm~\ref{algorithm_best_approximation}, probably because of the rounding 
errors. In cases when there is a huge need for precision it could be better 
to add this to the original algorithm, though. 

\section{Acknowledgements}
The second author is partially supported by Project MINCIN 
PID2019-103961GB-C21 (Spain) and Junta de Extremadura, Project IB20038. 


\bibliographystyle{plain}

\bibliography{Cheby}

\end{document}